\newcommand{\be}{\begin{eqnarray}}
\newcommand{\ee}{\end{eqnarray}}
\newtheorem{theo}{Theorem}
\newtheorem{remark}{Remark}
\newtheorem{lemma}{Lemma}
\newtheorem{defi}{Definition}
\newcommand{\R}{\mathbb R}
\newcommand{\C}{\mathbb C}
\newcommand{\bu}{{\bf u}}
\newcommand{\bq}{{\bf q}}
\newcommand{\EGSL}{$\mathcal E_5(L)$}
\newcommand{\EGSE}{$\mathcal E_5(E)$}
\begin{document}

\title{Consistency of some
well-posed five-field theories of
dissipative relativistic fluid dynamics}

\author{\it Heinrich Freist\"uhler\thanks{Department of Mathematics, University of Konstanz, 78457 Konstanz, Germany. Supported by DFG grant FR 822/11-1 within DFG Priority 
Programme 2410, and by grant NSF PHY-1748958 to the Kavli Institute for Theoretical Physics (KITP).}}

\date{November 6, 2025}

\maketitle




\begin{abstract}
\noindent
Within the FTBDNK family of formulations of relativistic Navier-Stokes
(H. Freist\"uhler and B. Temple, 
{\em Proc.\ R. Soc.\ A} {\bf 470}, 20140055 (2014),
{\em Proc. R. Soc. A} {\bf 473} (2017), 20160729; 
F. S. Bemfica, M. Disconzi, and J. Noronha, 
{\em Phys.\ Rev.\ D} {\bf 98},104064 (2018), 
{\em Phys. Rev. D} {\bf 100}, 104020 (2019);
P. Kovtun, 
{\em J. High Energy Phys.} {\bf 2019}, 034 (2019)),
this paper collects some consistency properties for certain causal hyperbolic 
five-field theories obtained from 
the Landau-Lifshitz formulation via Eulerian gradient shifts, 
a family, \EGSL, of models that slightly generalize a class identified     
in H. Freist\"uhler, {\em J. Math.\ Phys.}\ {\bf 61}, 033101 (2020).
With $\epsilon$ the magnitude of the dissipation coefficients that quantify viscosity 
and heat conduction, the paper shows that any element of \EGSL\
is $O(\epsilon^2)$ equivalent to the Landau-Lifshitz formulation,  
has an $O(\epsilon^3)$ excess entropy production,
represents heterogeneous local thermodynamic equilibria cleanly, and admits regular 
heteroclinic profiles for all shock waves of sufficiently small amplitude.  
\end{abstract}
\phantom{x}
\vskip -2cm
\phantom{x}


\section{Introduction}

To implement causality and hyperbolicity,  
B.\ Temple and this author proposed in \cite{FT14,FT17} a novel four-field 
formulation of relativistic Navier-Stokes for the dissipative ultrarelativistic 
fluid (radiation dominated matter, or `pure radiation') and a corresponding   
five-field formulation for non-barotropic fluids. To justify these formulations, 
we considered (quoting from \cite{FT17}:) ``the space $\mathcal F_5$ of all pairs 
of linear gradient forms
\be\label{preimage}
\begin{alignedat}{7}
\Delta T^{\alpha\beta}&=
T_U^{\alpha\beta \gamma\delta}\frac{\partial U_\gamma}{\partial x^\delta}
&&+
T_\rho^{\alpha\delta}\frac{\partial \rho}{\partial x^\delta}
&&+
T_n^{\alpha\delta}\frac{\partial n}{\partial x^\delta},
\\
\Delta N^\beta&=
N_U^{\gamma\delta}\frac{\partial U_\gamma}{\partial x^\delta}
&&+
N_\rho^{\delta}\frac{\partial \rho}{\partial x^\delta}
&&+
N_n^{\delta}\frac{\partial n}{\partial x^\delta},
\end{alignedat}
\ee
and express the smallness of dissipation by giving them 
a common small factor $\epsilon>0$, i.e., we consider 
$(\Delta T^{\alpha\beta},\Delta N^\beta)\in\mathcal F_5$ as representing the five-field theory
\be
 \label{nsfepslong}\begin{aligned}
&\frac{\partial}{\partial x^{\beta}}\,(T^{\alpha \beta}+\epsilon\Delta T^{\alpha \beta})
&&=0,\\
&\frac{\partial}{\partial x^{\beta}}(N^{\beta}+\epsilon\Delta N^\beta)
&&=0.
                \end{aligned}
\ee
We characterize a group of transformations that establishes formal
equivalences between different elements of $\mathcal{F}_5$ up to $O(\epsilon^2)$, and 
then show that our theory lies in the same equivalence class as Eckart's and Landau's.''

The group of \emph{first-order equivalence transformations} characterized in \cite{FT17} is generated by three kinds of
elements that we called velocity shifts, thermodynamic shifts, and Eulerian gradient
reexpressions, respectively. 
In this paper, we will restrict attention to a subgroup $\mathcal E_5$ 
that exclusively consists of Eulerian gradient reexpressions which at the same time are
(velocity or thermodynamic) shifts and consider the elements of \EGSL, the orbit of 
the Landau-Lifshitz description under the action of $\mathcal E_5$. 
We notably show the following three facts.
\begin{theo}
An Eulerian gradient shift is a \emph{second-order} equivalence transformation.
\end{theo}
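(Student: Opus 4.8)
The plan is to realize an Eulerian gradient shift $\mathcal T$ by an explicit $\epsilon$-dependent redefinition of the fluid fields under which every solution of an arbitrary theory $(\Delta T^{\alpha\beta},\Delta N^\beta)\in\mathcal F_5$ is carried to a solution of $\mathcal T(\Delta T^{\alpha\beta},\Delta N^\beta)$ up to an error of order $\epsilon^{3}$, i.e.\ one power of $\epsilon$ better than a general first-order equivalence transformation can guarantee.

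The first step is to make the defining feature of $\mathcal E_5$ explicit. Write $E^\alpha:=\partial_\beta T^{\alpha\beta}$, $E:=\partial_\beta N^\beta$ for the perfect-fluid (Euler) parts of the conservation laws \eqref{nsfepslong}, viewed as linear gradient forms. A velocity shift $U_\gamma\mapsto U_\gamma+\epsilon\psi_\gamma$ changes the dissipation tensors by $-(\partial T^{\alpha\beta}/\partial U_\gamma)\psi^\gamma$ and $-(\partial N^{\beta}/\partial U_\gamma)\psi^\gamma$, and a thermodynamic shift of $(\rho,n)$ by the corresponding $\rho$- and $n$-derivatives contracted with the shift scalars. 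Requiring this change to coincide with an Eulerian gradient reexpression — to be a combination $A^{\alpha\beta}_\mu E^\mu+a^{\alpha\beta}E$, $B^{\beta}_\mu E^\mu+b^{\beta}E$ of $E^\mu$ and $E$ — forces the shift itself (the covector $(\psi_\gamma)$, or the shift scalars) to lie pointwise in the span of $E^\mu$ and $E$. The only consequence needed below is that, on any solution of $(\Delta T^{\alpha\beta},\Delta N^\beta)$, the identities $E^\alpha=-\epsilon\,\partial_\beta\Delta T^{\alpha\beta}$, $E=-\epsilon\,\partial_\beta\Delta N^\beta$ coming from \eqref{nsfepslong} make the shift — and, differentiating once more, its first derivatives — $O(\epsilon)$.

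The second step is to run the shift as an exact change of dependent variables, $U_\gamma\mapsto\hat U_\gamma=U_\gamma+\epsilon\psi_\gamma$ (with an $O(\epsilon^{2})$ rescaling restoring $\hat U^\gamma\hat U_\gamma=\pm1$), substitute $U_\gamma=\hat U_\gamma-\epsilon\psi_\gamma+O(\epsilon^{2})$ into $T^{\alpha\beta}+\epsilon\,\Delta T^{\alpha\beta}$, and expand in $\epsilon$. This gives
\begin{equation}
T^{\alpha\beta}+\epsilon\,\Delta T^{\alpha\beta}
=T^{\alpha\beta}(\hat U)
+\epsilon\Big(\Delta T^{\alpha\beta}-\tfrac{\partial T^{\alpha\beta}}{\partial U_\gamma}\,\psi^\gamma\Big)(\hat U)
+\epsilon^{2}R^{\alpha\beta}+O(\epsilon^{3}),
\end{equation}
together with the analogous identity for $N^\beta$: the $\epsilon$-term is exactly $\mathcal T$ applied to the dissipation tensor, while $R^{\alpha\beta}$ collects the quadratic remainder of the substitution — schematically $\tfrac12(\partial^{2}T^{\alpha\beta}/\partial U_\gamma\partial U_\delta)\psi^\gamma\psi^\delta-(\partial\Delta T^{\alpha\beta}/\partial U_\gamma)\psi^\gamma$, plus the term from re-expressing the linear form $\psi^\gamma$ through $\nabla\hat U$, plus the rescaling term. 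The point is that every summand of $R^{\alpha\beta}$ carries at least one factor of $\psi$ or of a first derivative of $\psi$. Taking divergences, and using that the left side is divergence-free on a solution of $(\Delta T^{\alpha\beta},\Delta N^\beta)$, the redefined fields $(\hat U,\rho,n)$ satisfy the $\mathcal T$-transformed conservation laws with residual $-\epsilon^{2}\partial_\beta R^{\alpha\beta}$, and likewise for the particle current.

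The third step is the order count: by the first step $\psi$ and its first derivatives are $O(\epsilon)$ on solutions, so $R^{\alpha\beta}$, and then $\partial_\beta R^{\alpha\beta}$, are $O(\epsilon)$ there; the residual is therefore $O(\epsilon^{3})$, which is precisely the statement that the redefinition realizes $\mathcal T$ as a \emph{second-order} equivalence transformation. The computation for thermodynamic shifts and for the $N^\beta$-equation is the same. I expect the main obstacle to be organizational rather than conceptual: one must check term by term that $R^{\alpha\beta}$ has no $\psi$-free piece — it cannot, being the $O(\epsilon^{2})$ remainder of a variation proportional to $\psi$, but the contributions of the normalization of $\hat U$ and of the dependence of $\Delta T^{\alpha\beta}$, $\Delta N^\beta$ on the shifted fields deserve an explicit look — and one must fix at the outset that "$O(\epsilon)$ on solutions" and the concluding "$O(\epsilon^{3})$" are understood in the formal sense of \cite{FT17}, as identities of linear gradient forms modulo the leading-order equations of motion and their $\epsilon$-consequences.
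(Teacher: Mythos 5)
Your proposal is correct and follows essentially the same route as the paper: expand the $\epsilon$-dependent field redefinition, note that the quadratic remainder $R^{a\beta}$ carries at least one factor of the shift (or its derivative), and use that an Eulerian gradient shift is proportional to the Euler divergences $\partial_\beta T^{a\beta}$, which are $O(\epsilon)$ modulo the equations of motion, so the residual is $O(\epsilon^3)$. This is precisely the content of the paper's Lemma 1 applied to the expansion \eqref{newtotaltensor}.
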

\begin{theo}
On \emph{any} gradient, the difference between the entropy production of an element of 
\EGSL\ and the entropy production of the Landau-Lifshitz formulation L itself 
is of third order in $\epsilon$. 
\end{theo}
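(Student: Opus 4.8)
The plan is to obtain the entropy production of a general element of $\mathcal F_5$ directly from its entropy current, and then to trace how both quantities change under the Eulerian gradient shift carrying the Landau--Lifshitz description L to the given element $E$ of $\mathcal E_5(L)$.

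First I would record, for the theory represented by $(\Delta T^{\alpha\beta},\Delta N^\beta)$, the entropy current in the form standard in this circle of ideas,
\[
S^\beta \;=\; s\,U^\beta \;+\;\eps\Big(\!-\tfrac{U_\alpha}{T}\,\Delta T^{\alpha\beta}+\tfrac{\mu}{T}\,\Delta N^\beta\Big)\;+\;O(\eps^2),
\]
and reduce its divergence using the conservation laws (\ref{nsfepslong}) together with $e+p=Ts+\mu n$ and $\mathrm dp=s\,\mathrm dT+n\,\mathrm d\mu$. The ideal contribution cancels against $\partial_\beta(sU^\beta)$, leaving an \emph{entropy-production form} $\pi[\Delta T,\Delta N]$ which, once the ideal equations of motion have been used to remove the redundant components of the gradient $\bxi=(\partial_\delta U_\gamma,\partial_\delta\rho,\partial_\delta n)$, is a quadratic function of $\bxi$ of overall size $O(\eps)$. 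The object to be estimated is $\pi_E-\pi_L:=\pi[\Delta T_E,\Delta N_E]-\pi[\Delta T_L,\Delta N_L]$.

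Next I would exploit Theorem 1 and the computation behind it. An Eulerian gradient shift admits two simultaneous descriptions: (a) as the addition to $(\Delta T,\Delta N)$ of a fixed tensor multiple of the ideal balance residuals $\partial_\beta T^{\gamma\beta}_{\mathrm{id}}$, $\partial_\beta N^\beta_{\mathrm{id}}$ --- the gradient-reexpression picture; and (b) as the leading effect of a near-identity redefinition $\bpsi\mapsto\bpsi+\eps\,\chi(\bxi)$ of the fluid variables whose direction $\chi$ is, precisely because the reexpression is at the same time a shift, itself proportional to one of those ideal residuals. From (a), $\Delta T_E-\Delta T_L$ and $\Delta N_E-\Delta N_L$ are linear in the ideal residuals; from (b), the entropy current of $E$ is the pull-back of the one of L up to an $O(\eps^2)$ redefinition remainder, so that $\pi_E$ differs from $\pi_L$ only through that remainder and through $\eps\,\partial_\beta Y^\beta$ with $Y^\beta$ a linear gradient form proportional to $\chi^\beta$.

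Substituting into $\pi$ and expanding in $\eps$, I expect the order-$\eps$ discrepancy --- which is linear in the ideal residuals --- to cancel once the descriptions (a) and (b) are matched against each other: this is exactly the point at which being a \emph{shift} is stronger than being a generic reexpression, the cancellation being forced by $e+p=Ts+\mu n$ and by the ideal-fluid identity $\partial_\beta(sU^\beta)=-\tfrac{U_\alpha}{T}\partial_\beta T^{\alpha\beta}_{\mathrm{id}}-\tfrac{\mu}{T}\partial_\beta N^\beta_{\mathrm{id}}$ used above --- in effect a sharpening of the argument for Theorem 1. What then survives is quadratic in the ideal residuals (schematically, those residuals contracted against the Landau dissipative fluxes, which upon elimination of the redundant gradient components return yet another $O(\eps)$ factor of a residual), whence $\pi_E-\pi_L=O(\eps^3)$. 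The labour lies in (i) fixing the $O(\eps^2)$ parts of both entropy currents so the comparison is unambiguous, (ii) verifying that the order-$\eps$ cancellation is \emph{complete}, and (iii) performing the elimination of redundant gradients consistently so that the surviving term genuinely carries two powers of the residuals; step (ii) is the crux and the main obstacle. One may alternatively bypass (b) and simply insert the most general Eulerian gradient shift into $\pi$ and check the $O(\eps^3)$ bound term by term, but the structural route makes the two cancellations transparent.
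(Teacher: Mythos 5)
Your strategy is genuinely different from the paper's, and as it stands it has two gaps. The paper's proof (Lemma 2) is a direct computation: for a shift of the specific form \eqref{C}, the excess entropy production is found explicitly (adapting Lemma 5 of \cite{F20}) to be $\tilde{\mathcal Q}=\mu\theta^{-2}\Theta^2+\lambda\Psi^2+\frac{\nu}{\theta(\rho+p)}|\bq|^2$, i.e.\ a quadratic form in the quantities \eqref{ThetaQPsi}, each of which already carries one of the coefficients $\lambda,\mu,\nu=O(\eps)$; the $O(\eps^3)$ bound is then pure power counting valid for an \emph{arbitrary} gradient, and the vanishing on Eulerian gradients (hence Theorem 3) comes for free. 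Your route postpones exactly this content: the ``complete order-$\eps$ cancellation'' that you yourself flag as the crux in step (ii) is not forced by matching the reexpression picture (a) against the redefinition picture (b) together with $e+p=Ts+\mu n$ --- that matching is what underlies the second-order equivalence of Theorem 1, but $O(\eps^2)$ equivalence of the \emph{equations} does not by itself control the difference of entropy productions on arbitrary gradients to $O(\eps^3)$. What delivers the cancellation is the concrete tensorial structure \eqref{C}, \eqref{RPN}, \eqref{ThetaQPsi} of the members of \EGSL, which your argument never uses.

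More seriously, your final power count is an on-shell argument in disguise. The extra factor you extract ``upon elimination of the redundant gradient components'' --- another $O(\eps)$ from an ideal residual --- presupposes that $\partial_\beta T^{\alpha\beta}$ and $\partial_\beta N^\beta$ are themselves $O(\eps)$, which is true along (approximate) solutions but false on a general gradient, where these residuals are $O(1)$. Theorem 2 is asserted on \emph{any} gradient, and in the paper's count all three powers of $\eps$ come from the coefficients alone: $\lambda,\mu,\nu$ enter twice through $\Theta,\Psi,\bq$ (which are quadratically combined) and once more as the prefactors of the quadratic form; no equation of motion is invoked anywhere. As sketched, your argument would at best yield an estimate for the entropy-production difference evaluated along solutions, which is weaker than the stated theorem; to repair it you would have to show that the surviving term is quadratic in the residuals \emph{with coefficients of order} $\eps$, which is precisely the explicit formula for $\tilde{\mathcal Q}$ above. (Minor point: in your entropy current the symbol $\mu$ doubles as the chemical potential and as one of the shift coefficients of \eqref{C}; these must be kept distinct.)
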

\begin{theo}
Every element of \EGSL\ respects arbitrary, in particular heterogeneous, thermodynamic 
equilibria.  
\end{theo}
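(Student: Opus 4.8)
The plan is to reduce Theorem 3 to a single pointwise statement about the dissipative flux. First I would fix what a \emph{heterogeneous thermodynamic equilibrium} is: a spacetime configuration of the fields $(U,\rho,n)$ that solves the ideal (Euler) equations $\partial_\beta T^{\alpha\beta}=0$, $\partial_\beta N^\beta=0$ and on which all the first-order dissipative structures — the shear $\sigma^{\mu\nu}$, the expansion $\theta=\partial_\mu U^\mu$, and the heat/particle-diffusion gradients ($\nabla^\mu(\mu/T)$, equivalently $\nabla^\mu T+Ta^\mu$ with $a^\mu=U^\nu\partial_\nu U^\mu$) out of which the Landau-Lifshitz fluxes $\Delta T^{\alpha\beta}_{\mathrm{LL}}$, $\Delta N^\beta_{\mathrm{LL}}$ are linearly built — vanish identically; equivalently, a configuration for which $\beta_\mu=U_\mu/T$ is a Killing vector of Minkowski space with $\mu/T$ constant. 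I would then record the elementary but decisive remark that a theory of the form (\ref{nsfepslong}) \emph{respects} such a configuration once its dissipative flux $(\Delta T^{\alpha\beta},\Delta N^\beta)$, evaluated along the configuration, is the identically-zero spacetime function: being identically zero it is divergence-free, so the configuration — already a solution of the ideal equations — is an exact solution of the full system.

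Second, I would note that $L$ itself has this property trivially, because $\Delta T^{\alpha\beta}_{\mathrm{LL}}$ and $\Delta N^\beta_{\mathrm{LL}}$ are linear in $\sigma^{\mu\nu}$, $\theta$ and the heat/diffusion gradients, each of which vanishes identically on any equilibrium.

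Third — the heart of the matter — I would take an arbitrary element of \EGSL, write it $g\cdot L$ with $g\in\mathcal E_5$, and express $\Delta T^{\alpha\beta}_{g\cdot L}$, the linear gradient form of (\ref{preimage}) defining $g\cdot L$, in terms of $\Delta T^{\alpha\beta}_{\mathrm{LL}}$, using the explicit generator structure of the first-order equivalence group of \cite{FT17} together with the present paper's restriction to $\mathcal E_5$. The resulting form differs from $\Delta T^{\alpha\beta}_{\mathrm{LL}}$ by two contributions: (i) the effect of an Eulerian gradient reexpression, which only trades derivatives along $U$ for the spacelike combinations prescribed by the Euler equations and hence modifies $\Delta T^{\alpha\beta}_{\mathrm{LL}}$ by a combination of those equations; and (ii) a shift contribution $(\partial T^{\alpha\beta}/\partial U_\gamma)\,w_\gamma+(\partial T^{\alpha\beta}/\partial\rho)\,s+(\partial T^{\alpha\beta}/\partial n)\,t$, where $(w_\gamma,s,t)$ is the shift form carried by $g$; likewise for $\Delta N^\beta_{g\cdot L}$. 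The defining property of $\mathcal E_5$ — that each of its elements is simultaneously a velocity or thermodynamic shift and an Eulerian gradient reexpression — forces $(w_\gamma,s,t)$ to be built out of $\sigma^{\mu\nu}$, $\theta$, the heat/diffusion gradients and the Euler equations alone, consistently with the reading of these transformations as frame changes generated by dissipative fluxes. On an equilibrium the Euler equations are satisfied and $\sigma^{\mu\nu}$, $\theta$ and the heat/diffusion gradients vanish identically; therefore contributions (i) and (ii) vanish identically there, so $\Delta T^{\alpha\beta}_{g\cdot L}=\Delta T^{\alpha\beta}_{\mathrm{LL}}=0$ and $\Delta N^\beta_{g\cdot L}=\Delta N^\beta_{\mathrm{LL}}=0$ along the configuration. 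By the remark of the first paragraph, $g\cdot L$ — hence every element of \EGSL — respects the equilibrium. (Equivalently: the field redefinition performed by $g$ fixes every equilibrium configuration pointwise, and the higher-order terms it discards are quadratic in quantities vanishing there, so $g\cdot L$ and $L$ agree along equilibria.)

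I expect the main obstacle to be the structural claim used in the third step — that the shift form of every element of $\mathcal E_5$ is assembled solely from $\sigma^{\mu\nu}$, $\theta$, the heat/diffusion gradients and the Euler equations — which requires unwinding, inside the equivalence group of \cite{FT17}, the intersection of the velocity/thermodynamic shifts with the Eulerian gradient reexpressions and verifying that nothing in that intersection involves a dissipative structure failing to vanish on a heterogeneous equilibrium. Theorems 1 and 2 are consistent with this (they locate the action of $g$ on $L$ at orders $\epsilon^2$ and $\epsilon^3$) but give only approximate, not exact pointwise, information, so this step must be carried out directly rather than inferred from them.
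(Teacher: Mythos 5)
Your proposal follows essentially the same route as the paper: decompose the dissipative flux of an element of \EGSL\ into the Landau--Lifshitz part, which is annihilated by equilibria, plus the shift contribution, which vanishes because equilibria are Eulerian gradients (solutions of the ideal equations), so the full flux vanishes identically and the equilibrium solves the complete system --- this is exactly the paper's Lemma 3 combined with the characterization \eqref{lte}. The ``main obstacle'' you flag dissolves once the explicit Definition of an Eulerian gradient shift is used: the shift is $(\Delta\tilde\psi)^a=C^a_{\tilde a}\,\partial_\beta T^{\tilde a\beta}$, so the extra term $\tilde\Delta\tilde T^{a\beta}=\frac{\partial T^{a\beta}}{\partial\psi^f}C^f_{\tilde f}\,\partial_\delta T^{\tilde f\delta}$ is manifestly proportional to the Euler operator and vanishes pointwise on any Eulerian gradient, with no further analysis of dissipative structures required.
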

Note that the essential idea of Eulerian gradient shifts (together with a version of Theorem
2!) appeared already in \cite{F20}, though the variant we use here has an additional degree
of freedom (cf.\ Remark 1 below). 
Theorems 1, 2, and 3 are proved in the next section, while Secs.\ 3 to 5 gather additional 
aspects of the FTBDNK\footnote{This acronym was kindly suggested by Marcelo Disconzi (private communication, February 2025).}   family.


\section{Eulerian gradient shifts}
\setcounter{equation}0
To recapitulate and state things concisely, we write \eqref{nsfepslong} shortly as
\be\label{nsfeps}
\frac{\partial}{\partial x^{\beta}}\,(T^{a \beta}+\epsilon\Delta T^{a\beta})=0,
\ee
where the index $a$ runs from $0$ to $4,\ T^{0\beta}\equiv N^\beta, 
\Delta T^{0\beta}\equiv\Delta N^\beta$,
and \eqref{preimage} as
\be 
\Delta T^{a\beta}=B^{a\beta c\delta}\frac{\partial\psi_c}{\partial x^\delta}.
\ee
In \cite{FT17} I used gradient transforms 
\be\label{gradtrafo} 
\psi^a=\tilde\psi^a+\epsilon(\Delta\tilde\psi)^a
\quad\text{with}\quad
(\Delta\tilde\psi)^a=S^{a\beta c}\frac{\partial\tilde\psi_c}{\partial x^\beta}
\ee 
to rewrite the total energy-momentum-mass tensor $T^{a\beta}+\epsilon\Delta T_L^{a\beta}$ as
\begin{align}
T^{a\beta}+\epsilon\Delta T^{a\beta}_L
&=T^{a\beta}(\psi^e)
  +\epsilon B_L^{a\beta c\delta}(\psi^e)\frac{\partial\psi^c}{\partial x^\delta}
  \notag\\
&= T^{a\beta}(\tilde\psi^e)
  +\epsilon\left(
    B_L^{a\beta c\delta}(\tilde\psi^e)\frac{\partial\tilde\psi^c}{\partial x^\delta}
  +\frac{\partial T^{a\beta}}{\partial \psi^f}(\tilde\psi^e)
   S^{f\delta c}(\tilde\psi^e)\frac{\partial\tilde\psi_c}{\partial x^\delta}\right) \\
  &\quad 
  + \epsilon^2\left(
    \frac{\partial^2 T^{a\beta}}{\partial \psi^f\partial\psi^g}(\tilde\psi^e)
    (\Delta\tilde\psi)^f(\Delta\tilde\psi)^g
  + \frac{\partial B_L^{a\beta c\delta}}{\partial\psi^f}(\tilde\psi^e)
  (\Delta\tilde\psi)^f\frac{\partial\psi_c}{\partial x^\delta}
  + B_L^{a\beta c\delta}(\tilde\psi^e)\frac{\partial(\Delta\tilde\psi)_c}{\partial x^\delta}  
  \right)\notag\\
&\quad   + O(\epsilon^3).\notag  
\end{align}
We abbreviate this, keeping the order of terms, as 
\be\label{newtotaltensor} 
T^{a\beta}+\epsilon\Delta T^{a\beta}=
\tilde T^{a\beta}+\epsilon\left(\Delta\tilde T^{a\beta}+\tilde\Delta\tilde T^{a\beta}\right)
+\epsilon^2 R^{a\beta}+O(\epsilon^3).
\ee
The idea was to use 
\be\label{widetildeDeltaT}
\widetilde{\Delta T}^{a\beta}&=\Delta\tilde T^{a\beta}+\tilde\Delta\tilde T^{a\beta}
=\tilde B^{a\beta c\delta}\displaystyle{\frac{\partial\tilde\psi_c}{\partial x^\delta}}
\ee
with
\be\label{tildeB}
\tilde B^{a\beta c\delta}
=B_L^{a\beta c\delta}
+\delta B^{a\beta c\delta}\quad\text{where}\quad
\delta B^{a\beta c\delta}=
\frac{\partial T^{a\beta}}{\partial \psi^f}S^{f\delta c}
\ee 
as new formulation of the dissipation tensor, this being justified by the fact that, as equation
\eqref{newtotaltensor} shows, 
\be\label{tildensfeps}
\frac{\partial}{\partial x^{\beta}}\,
(\widetilde{T}^{a \beta}+\epsilon\widetilde{\Delta T}^{a\beta})=0
\ee
agrees with \eqref{nsfeps} to first order in $\epsilon$.

The starting point of the present paper, indeed already behind \cite{F20},
is now the following observation.
\begin{lemma}
If \eqref{gradtrafo} is an Eulerian gradient reexpression, i.e., 
\be\label{egrgen}
S^{a\beta c}=C^a_{\tilde a}T^{{\tilde a}\beta c}
\quad\text{with }
T^{a\beta c}=\partial T^{a\beta}/\partial\psi_c,   
\ee
then \eqref{tildensfeps} agrees with \eqref{nsfeps} even to \emph{second} order in 
$\epsilon$.
\end{lemma}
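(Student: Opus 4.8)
The plan is to exploit the special form \eqref{egrgen} of $S$ to show that, on solutions, the gradient shift $(\Delta\tilde\psi)^a$ is itself already of order $\epsilon$, so that the remainder $\epsilon^2R^{a\beta}$ in \eqref{newtotaltensor} drops to $O(\epsilon^3)$ and \eqref{tildensfeps} agrees with \eqref{nsfeps} to that order. The first and decisive step is to substitute \eqref{egrgen} into \eqref{gradtrafo} and observe that the gradient combination defining an Eulerian gradient shift collapses into the divergence of the ideal conserved currents:
\[
(\Delta\tilde\psi)^a
=S^{a\beta c}\,\frac{\partial\tilde\psi_c}{\partial x^\beta}
=C^a_{\tilde a}\,\frac{\partial T^{\tilde a\beta}}{\partial\psi_c}\,\frac{\partial\tilde\psi_c}{\partial x^\beta}
=C^a_{\tilde a}\,\frac{\partial T^{\tilde a\beta}}{\partial x^\beta},
\]
the second equality being \eqref{egrgen}, the third the chain rule. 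Now the field equations \eqref{nsfeps} state precisely that $\partial T^{\tilde a\beta}/\partial x^\beta=-\epsilon\,\partial(\Delta T^{\tilde a\beta})/\partial x^\beta=O(\epsilon)$, whence $(\Delta\tilde\psi)^a=O(\epsilon)$ on any solution and, differentiating the conservation laws once more, $\partial(\Delta\tilde\psi)^a/\partial x^\delta=O(\epsilon)$ as well.

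With this in hand I would inspect the three terms of $R^{a\beta}$ in \eqref{newtotaltensor}: the first is quadratic in $(\Delta\tilde\psi)$, hence $O(\epsilon^2)$; the second carries one factor $(\Delta\tilde\psi)$ and the third one factor $\partial(\Delta\tilde\psi)$, hence each is $O(\epsilon)$. Therefore $\epsilon^2R^{a\beta}=O(\epsilon^3)$ on solutions, \eqref{newtotaltensor} reduces to $T^{a\beta}+\epsilon\Delta T^{a\beta}=\widetilde T^{a\beta}+\epsilon\widetilde{\Delta T}^{a\beta}+O(\epsilon^3)$, and applying $\partial/\partial x^\beta$ gives the asserted second-order agreement of \eqref{tildensfeps} with \eqref{nsfeps}; running the same estimate from a solution of \eqref{tildensfeps} supplies the reverse implication.

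The genuine content of the lemma is the identity $(\Delta\tilde\psi)^a=C^a_{\tilde a}\,\partial T^{\tilde a\beta}/\partial x^\beta$ together with the observation that this divergence is $O(\epsilon)$ by virtue of the equations of motion — which is precisely where the hypothesis \eqref{egrgen} is indispensable: for a general gradient transform \eqref{gradtrafo} the combination $S^{a\beta c}\partial\tilde\psi_c/\partial x^\beta$ is \emph{not} a combination of the conserved-current divergences, so $(\Delta\tilde\psi)$ merely stays $O(1)$, $R^{a\beta}$ genuinely survives at order $\epsilon^2$, and \eqref{gradtrafo} is then only a first-order equivalence. Everything else is routine: the order counting of the three terms of $R^{a\beta}$ — in particular the check that the $\partial(\Delta\tilde\psi)$-term loses only one power of $\epsilon$, because the conservation laws stay $O(\epsilon)$ under a further differentiation — together with the harmless fact that any $\psi$-dependence of $C^a_{\tilde a}$ enters only through bounded factors.
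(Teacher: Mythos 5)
Your proposal is correct and follows essentially the same route as the paper: the key identity $(\Delta\tilde\psi)^a=C^a_{\tilde a}\,\partial T^{\tilde a\beta}/\partial x^\beta=O(\epsilon)$ via the chain rule and the equations of motion, hence $R^{a\beta}=O(\epsilon)$ and the $\epsilon^2$-remainder in \eqref{newtotaltensor} becomes $O(\epsilon^3)$. Your version merely spells out the term-by-term order count for $R^{a\beta}$ (including the derivative $\partial(\Delta\tilde\psi)^a/\partial x^\delta=O(\epsilon)$), which the paper leaves implicit.
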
 
\begin{proof}
In that case, 
\be 
(\Delta\tilde\psi)^a=S^{a\beta c}\frac{\partial\tilde\psi_c}{\partial x^\beta}=
C^a_{\tilde a}(\partial \tilde T^{\tilde a\beta}/\partial x^\beta)=O(\epsilon)
\ee
and thus 
\be
R^{a\beta}=O(\epsilon)
\ee
in \eqref{newtotaltensor}.
\end{proof}
\begin{defi}
(i) A transformation \eqref{gradtrafo} is called an \emph{Eulerian gradient shift} iff
$S^{a\beta c}$ is given by \eqref{egrgen} with 
\begin{align}\label{C}
C^\alpha_{\tilde\alpha}
&=
\frac\mu{\theta^2}U^\alpha U_{\tilde\alpha}
+\frac\nu\theta\Pi^\alpha_{\tilde\alpha}\\
C^4_4&=\lambda\\  
C^\alpha_4&=C^4_\alpha=0
\end{align}
and coefficients $\lambda,\mu,\nu$.

(ii) We denote by $\mathcal E_5$ the subgroup of $\mathcal F_5$ 
that consists of all Eulerian gradient shifts, and by \EGSL\ the orbit of the 
Landau-Lifshitz dissipation tensor under the action of $\mathcal E_5$.
\end{defi}
\begin{remark}
(i) Choice \eqref{C} is very similar to that of Def.\ 1 in \cite{F20}, corresponding, 
for a given fluid $p=p(\theta,\psi)$, to
\be
-\tilde\Delta T^{\alpha\beta}\equiv
U^\alpha U^\beta\tilde R+(\tilde Q^\alpha U^\beta + U^\alpha\tilde Q^\beta)
+\Pi^{\alpha\beta}\tilde P
\label{DeltatildeT}
\ee
and
\be\label{DeltatildeN}
-\tilde\Delta{N}^\beta
\equiv
U^\beta\tilde{N}
+\frac1h\tilde{Q}^\beta
\ee
with 
\be\label{RPN}
\tilde R= \rho_\theta\Theta+\rho_\psi\Psi,\quad 
\tilde P=p_\theta\Theta+p_\psi\Psi,\quad
\tilde N=n_\theta\Theta+n_\psi\Psi
\ee
where now
\be\label{ThetaQPsi}
\Theta= -\mu U_\epsilon\partial_\delta T^{\epsilon\delta},\quad
\tilde Q_\gamma=\nu \Pi_{\gamma\epsilon}\partial_\delta T^{\epsilon\delta},\quad
\Psi=\lambda \partial_\delta N^\delta.
\ee
Concretely, specializing to $\lambda=\mu$ recovers the dissipation tensors of Definition 
4 of \cite{F20}. 

(ii) The assertion of Lemma 4 in \cite{F20} remains valid. 
\end{remark}
We next consider the entropy production
$$
{\mathcal Q}
= 
\frac{\eta}{2\theta}||\mathcal S\bu||^2+\frac{\zeta}\theta(\nabla\cdot\bu)^2
+
\frac\kappa {h^2}|\nabla\psi|^2+\tilde{\mathcal Q}.
$$
\begin{lemma}
For \eqref{nsfeps} with $\Delta T^{a\beta}\in$ \EGSL, the excess entropy production
induced by the causalizing approximation\\
(i) vanishes on Eulerian gradients,\\
(ii) is of order $O(\epsilon^3)$ on general gradients.
\end{lemma}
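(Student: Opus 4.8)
Throughout, $\psi$ denotes the fields of the element of \EGSL\ in question --- the $\tilde\psi$ in the derivation of \eqref{newtotaltensor} --- $T^{a\beta}=T^{a\beta}(\psi)$ the ideal currents, and $r^a:=\partial_\delta T^{a\delta}=T^{a\delta c}\,\partial_\delta\psi_c$ ($a=0,\dots,4$) the associated Euler residuals, i.e.\ exactly the left-hand sides of the perfect-fluid equations. The plan is to reduce the lemma to the single identity
\be
\tilde{\mathcal Q}=-\epsilon\,(\partial\chi_a/\partial\psi_f)\;r^a\,C^f_{\tilde f}\,r^{\tilde f},
\ee
i.e.\ that $\tilde{\mathcal Q}$ is, up to the dissipation factor $\epsilon$, a bilinear form in the $r^a$ with smooth, $\epsilon$-independent coefficients; here $\chi_a$ is the (five-component) thermodynamic conjugate of the ideal currents $T^{a\beta}$ and $C^f_{\tilde f}$ is the tensor of \eqref{C}. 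Granting this, (i) is immediate, since on an Eulerian gradient the ideal equations hold, hence $r^a\equiv 0$; and (ii) follows because on any solution of \eqref{nsfeps} with $\Delta T^{a\beta}\in$ \EGSL\ one has $r^a=-\epsilon\,\partial_\delta\Delta T^{a\delta}=O(\epsilon)$, so the bilinear form is $O(\epsilon^2)$ and, with the prefactor, $\tilde{\mathcal Q}=O(\epsilon^3)$.

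To reach the identity I would first recall the standard assembly of the entropy production of a gradient-form theory \eqref{nsfeps}. With the covariant Gibbs relation $\partial s^\beta/\partial\psi_c=-\chi_a\,T^{a\beta c}$ for the ideal entropy current $s^\beta=s^\beta(\psi)$, together with \eqref{nsfeps}, the entropy current $s^\beta-\chi_a\,\epsilon\Delta T^{a\beta}$ has, on solutions, the divergence $-\epsilon\,(\partial_\delta\chi_a)\,\Delta T^{a\delta}$; for the Landau-Lifshitz tensor this is classically the displayed form $\frac{\eta}{2\theta}||\mathcal S\bu||^2+\frac{\zeta}{\theta}(\nabla\cdot\bu)^2+\frac{\kappa}{h^2}|\nabla\psi|^2$, with $\eta,\zeta,\kappa$ of order $\epsilon$. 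Hence, for an element of \EGSL\ with $\widetilde{\Delta T}^{a\beta}=\Delta\tilde T^{a\beta}+\tilde\Delta\tilde T^{a\beta}$ (cf.\ \eqref{widetildeDeltaT}), $\Delta\tilde T^{a\beta}$ being the Landau-Lifshitz tensor in the shifted variables, the excess is $\tilde{\mathcal Q}=-\epsilon\,(\partial_\delta\chi_a)\,\tilde\Delta\tilde T^{a\delta}$. Now (cf.\ \eqref{newtotaltensor}) $\tilde\Delta\tilde T^{a\beta}=T^{a\beta f}(\Delta\tilde\psi)^f$, and by \eqref{gradtrafo} and \eqref{egrgen} the shift vector is residual-valued, $(\Delta\tilde\psi)^f=S^{f\delta c}\partial_\delta\psi_c=C^f_{\tilde f}\,r^{\tilde f}$; therefore $\tilde{\mathcal Q}=-\epsilon\,(\partial_\delta\chi_a)\,T^{a\delta f}\,C^f_{\tilde f}\,r^{\tilde f}$.

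The key step is now to pull a \emph{second} residual out of the factor $(\partial_\delta\chi_a)\,T^{a\delta f}$. Differentiating the Gibbs relation once more in $\psi_e$ and using the symmetry of $\partial^2 T^{a\beta}/\partial\psi_c\partial\psi_e$ gives $(\partial\chi_a/\partial\psi_e)\,T^{a\beta f}=(\partial\chi_a/\partial\psi_f)\,T^{a\beta e}$; contracting with $\partial_\delta\psi_e$ and using the chain rule then yields $(\partial_\delta\chi_a)\,T^{a\delta f}=(\partial\chi_a/\partial\psi_f)\,r^a$, which is precisely the displayed identity. Part (i) follows at once; in fact on an Eulerian gradient already $\tilde\Delta\tilde T^{a\beta}$ vanishes, by \eqref{RPN}--\eqref{ThetaQPsi}, so there $\widetilde{\Delta T}$ coincides with the Landau-Lifshitz tensor. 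Part (ii) follows from $r^a=O(\epsilon)$ on solutions of \eqref{nsfeps} with $\Delta T^{a\beta}\in$ \EGSL.

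The step I expect to be the main obstacle is the bookkeeping behind the entropy-production formula invoked above: one must verify that, for the pertinent (minimal) entropy current, nothing enters $\mathcal Q$ beyond $-\epsilon(\partial_\delta\chi_a)\widetilde{\Delta T}^{a\delta}$; that its $\Delta\tilde T$-part reproduces the Landau part verbatim in the shifted variables; and that any non-minimal correction to the entropy current affects --- by Theorem~1 (second-order equivalence) together with $r^a=O(\epsilon)$ --- the \emph{difference} between \EGSL\ and L only at order $\epsilon^3$, so that it lands in the $O(\epsilon^3)$ remainder. Keeping the frame and sign conventions for $\chi_a$ consistent, so that the covariant Gibbs relation --- and with it the symmetry $(\partial\chi_a/\partial\psi_e)T^{a\beta f}=(\partial\chi_a/\partial\psi_f)T^{a\beta e}$ --- holds in the form used, is the attendant care; everything else is the routine substitution carried out above, essentially the computation already present in \cite{F20}.
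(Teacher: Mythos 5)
Your reduction is essentially the paper's: the paper likewise computes the excess production $\tilde{\mathcal Q}$ as a positive quadratic form in quantities proportional to the Euler residuals (namely $\Theta,\Psi,\bq$ of \eqref{ThetaQPsi}), and your part (i) coincides with its argument, since those quantities, like your $r^a$, vanish identically on Eulerian gradients. Your algebra leading to $\tilde{\mathcal Q}=-\epsilon\,(\partial\chi_a/\partial\psi_f)\,r^a C^f_{\tilde f}r^{\tilde f}$ (the once-differentiated Gibbs relation plus the symmetry of $\partial^2 T^{a\beta}/\partial\psi_c\partial\psi_e$) is sound; in the Godunov variables used here one even has $\chi_a=\psi_a$, so the form is just $r^fC^f_{\tilde f}r^{\tilde f}$ up to sign.

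The genuine gap is in part (ii): the lemma (and Theorem 2, which it is meant to prove) asserts the $O(\epsilon^3)$ bound \emph{on general gradients} -- ``on any gradient'' -- i.e.\ pointwise for arbitrary field configurations, with no appeal to the field equations. Your argument instead invokes $r^a=-\epsilon\,\partial_\delta\Delta T^{a\delta}=O(\epsilon)$, which holds only \emph{on solutions} of \eqref{nsfeps}; on an arbitrary gradient $r^a=O(1)$, and your expression then yields only $O(\epsilon)$ (or $O(\epsilon^2)$ if, as in the paper's proof, the coefficients $\lambda,\mu,\nu$ in $C$ are themselves counted as $O(\epsilon)$) -- one or two orders short. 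So as written you prove the on-shell version of (ii), not the off-shell statement. The paper's proof is structured precisely to avoid this: it writes $\tilde{\mathcal Q}$ in terms of $\Theta,\Psi,\bq$, each of which already carries one dissipation coefficient from \eqref{ThetaQPsi} and is therefore $O(\epsilon)$ pointwise on any gradient, so that each term -- an $O(\epsilon)$ coefficient times the square of an $O(\epsilon)$ quantity -- is $O(\epsilon^3)$ without ever using \eqref{nsfeps} as an equation. To repair your version in the same spirit, keep the coefficients inside the squared quantities (i.e.\ express the quadratic form through the shift variables $\Theta,\Psi,\bq$ rather than through the bare residuals $r^a$) and take all three factors of $\epsilon$ from $\lambda,\mu,\nu$; the appeal to solutions, and with it the ``main obstacle'' paragraph about non-minimal entropy currents on solutions, then becomes unnecessary.
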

\begin{proof}
Computing as in the proof of Lemma 5 in \cite{F20}, we now find
\be 
\tilde{\mathcal Q}
= 
\mu\theta^{-2}\Theta^2+\lambda\Psi^2
+\frac{\nu}{\theta(\rho+p)}|\bq|^2.
\ee
Thus, (ii) holds as $\Theta,\Psi,\bq$ and $\lambda,\mu,\nu$ are of $O(\epsilon)$,
and (i) follows as $\Theta, \Psi,\bq$ vanish on Eulerian gradients. 
\end{proof}
In the same way we see 
\begin{lemma}
$\tilde\Delta T^{a\beta}$ vanishes on Eulerian gradients. 
\end{lemma}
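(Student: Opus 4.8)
The plan is to mimic the proofs of Lemmas 1 and 2. The object $\tilde\Delta T^{a\beta}$ in the statement is the \emph{shift contribution} to the causalized dissipation tensor --- the piece $\delta B^{a\beta c\delta}\,\partial\tilde\psi_c/\partial x^\delta$ singled out in \eqref{widetildeDeltaT}--\eqref{tildeB} and written out componentwise in \eqref{DeltatildeT}--\eqref{ThetaQPsi} --- and \emph{not} the full $\widetilde{\Delta T}^{a\beta}$; with that distinction fixed, the claim reduces to the observation that this contribution depends on the gradient only through the leading-order flux divergences $\partial T^{\tilde f\delta}/\partial x^\delta$, $\tilde f=0,\dots,4$, which vanish identically on Eulerian gradients.

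Concretely, I would first unwind the definitions: by \eqref{tildeB}, $\tilde\Delta T^{a\beta}=\delta B^{a\beta c\delta}\,\partial\tilde\psi_c/\partial x^\delta$ with $\delta B^{a\beta c\delta}=\big(\partial T^{a\beta}/\partial\psi^f\big)\,S^{f\delta c}$, equivalently $\tilde\Delta T^{a\beta}=\big(\partial T^{a\beta}/\partial\psi^f\big)(\Delta\tilde\psi)^f$; and because \eqref{gradtrafo} is here an Eulerian gradient shift, \eqref{egrgen} lets me substitute $S^{f\delta c}=C^f_{\tilde f}\,T^{\tilde f\delta c}$, so that the chain rule $T^{\tilde f\delta c}\,\partial\tilde\psi_c/\partial x^\delta=\partial T^{\tilde f\delta}/\partial x^\delta$ --- the very identity used in the proof of Lemma 1 --- collapses the expression to
\[
\tilde\Delta T^{a\beta}=\frac{\partial T^{a\beta}}{\partial\psi^f}\;C^f_{\tilde f}\;\frac{\partial T^{\tilde f\delta}}{\partial x^\delta},
\]
a fixed tensor contracted against the ideal-fluid balance fluxes. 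Since an Eulerian gradient is, by definition, one on which $\partial T^{\tilde f\delta}/\partial x^\delta=0$ for every $\tilde f$ --- equivalently, by the block structure \eqref{C} of $C^f_{\tilde f}$, one on which $\Theta,\Psi$ and the heat flux $\tilde Q_\gamma$ of \eqref{ThetaQPsi}, hence also $\tilde R,\tilde P,\tilde N$ of \eqref{RPN}, all vanish --- the right-hand side is zero. This is exactly the assertion; the same computation through \eqref{DeltatildeN} shows $\tilde\Delta N^\beta$ vanishes on Eulerian gradients as well.

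I do not expect a genuine obstacle: the lemma is the algebraic core already behind Lemma 1, namely $\tilde\Delta T^{a\beta}=\big(\partial T^{a\beta}/\partial\psi^f\big)(\Delta\tilde\psi)^f$ with $(\Delta\tilde\psi)^f=C^f_{\tilde f}\,\partial\tilde T^{\tilde f\delta}/\partial x^\delta$ a combination of the leading-order flux divergences --- which is also why it is $O(\epsilon)$ there. The only point needing care is bookkeeping: being explicit that $\tilde\Delta T^{a\beta}$ is the shift piece $\delta B^{a\beta c\delta}\,\partial\tilde\psi_c/\partial x^\delta$ alone (the full $\widetilde{\Delta T}^{a\beta}$ does \emph{not} vanish on spatial gradients, since the Landau--Lifshitz part $B_L^{a\beta c\delta}$ does not), and keeping the spacetime indices in \eqref{egrgen}--\eqref{C} distinct from the particle-number slot.
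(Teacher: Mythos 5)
Your proposal is correct and follows essentially the paper's own route: the paper proves Lemma 3 ``in the same way'' as Lemma 2, i.e.\ by noting that the shift contribution \eqref{DeltatildeT}--\eqref{ThetaQPsi} depends on the gradient only through $\Theta,\Psi,\tilde Q$ (the leading-order flux divergences), which vanish on Eulerian gradients. Your explicit identification $\tilde\Delta T^{a\beta}=\bigl(\partial T^{a\beta}/\partial\psi^f\bigr)C^f_{\tilde f}\,\partial T^{\tilde f\delta}/\partial x^\delta$ is exactly that argument, just spelled out.
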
 
Note finally that Lemma 1 means Theorem 1 and Lemma 2 implies Theorem 2, and as 
local thermodynamic equilibria, characterized 
(cf.\ \cite{C,GL})
by 
\be\label{lte} 
\frac{\partial \psi_\gamma}{\partial x^\delta} \text{ being antisymmetric in }\gamma, \delta,
\quad\text{and}\quad 
\frac{\partial \psi}{\partial x^\delta}=0,
\ee 
of course both are Eulerian gradients and annhilate the Landau-Lifshitz dissipation tensor, Lemma 
3 implies Theorem 3.
 
\section{The barotropic case}
Barotropic fluids are given by an equation of state $p=p(\theta)$ and one uses
only the components of the  4-vector $\psi^\alpha=U^\alpha/\theta$ as Godunov variables.   
Calling the resulting 4-field theory according to Landau-Lifshitz $L_4$, one defines 
barotropic Eulerian gradient shifts analogously to the aboveconsidered non-barotropic ones by suppressing the parts refering to the potential  
$\psi=g/\theta$ from the above considerations and obtains a barotropic counterpart 
$\mathcal E_4(\mathcal L_4)$.  

\section{Connection with symmetry in Godunov variables}
From
$$
\Delta T^{\alpha\beta}=B^{\alpha\beta\gamma\delta}\frac{\partial\psi_\gamma}{\partial x^\delta}
$$  
one sees that symmetry in the sense of Def.\ 2 (i) in \cite{F20} combined with the 
a priori principle that the energy-momentum tensor must be symmetric in the indices
$\alpha$ and $\beta$ implies that the coefficient field $B^{\alpha\beta\gamma\delta}$ 
is also symmetric in the indices $\gamma$ and $\delta$. I.\ e., the vanishing 
of $\tilde\Delta T^{\alpha\beta}$ on local thermodynamic equilibria, \eqref{lte}$_1$,  
is also a consequence of symmetry in the sense of Def.\ 2 (i) in \cite{F20}. 

\section{Eulerian gradient shifts of the Eckart description} 
Returning to the non-barotropic case, we state that Eckart's formulation E, while 
first-order equivalent with the Landau-Lifshitz description L, is not second-order 
equivalent with L. The orbit \EGSE\ of $E$ under the action of thr group of Eulerian
gradient shifts can be studied analogously to the above considerations. 
on \EGSL.

\section{Shock profiles}

\begin{theo}
For almost any choice of the coefficients $\lambda,\mu,\nu$,  
every Lax shock 
\be\label{lsw}
\psi_c(x)=\begin{cases}
       \psi_c^-,&x^\beta \xi_\beta<0,\\
       \psi_c^+,&x^\beta \xi_\beta>0,\
       \end{cases}\quad\xi^\beta\xi_\beta=1,
\ee
of sufficiently small amplitude possesses a dissipation profile w.\ r.\ t.\ \eqref{tildensfeps}, \eqref{widetildeDeltaT}, \eqref{tildeB}, \eqref{egrgen}, 
i.e., the ODE system
\be\label{profileODE}
\xi_\beta\xi_\delta \tilde B^{a\beta c\delta}(\psi^e)\psi_c'
=
\xi_\beta T^{a\beta}(\psi)-q^a,
\quad 
q^a:=\xi_\beta T^{a\beta}(\psi^e_\pm)
\ee
has a solution $\hat\psi_c$ on $\R$ which is heteroclinic to the states forming the shock,
\be
\label{hetero}
\hat\psi^c(-\infty)=\psi^c_-,\quad  
\hat\psi^c(+\infty)=\psi^c_+.
\ee
\end{theo}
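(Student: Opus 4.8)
\emph{Proof idea.}
The plan is to treat (\ref{profileODE}) as the autonomous quasilinear system
\[
A_\xi(\psi)\,\psi'=F_\xi(\psi),\qquad A_\xi(\psi):=\xi_\beta\xi_\delta\tilde B^{a\beta c\delta}(\psi),\quad F_\xi(\psi):=\xi_\beta T^{a\beta}(\psi)-q^a,
\]
on $\R^5$. Because the step function (\ref{lsw}) is a weak solution of the inviscid system $\partial_\beta T^{a\beta}=0$, the Rankine--Hugoniot identity $\xi_\beta T^{a\beta}(\psi^e_-)=\xi_\beta T^{a\beta}(\psi^e_+)=q^a$ holds, so $\psi^e_-$ and $\psi^e_+$ are both rest points and (\ref{hetero}) asks for a trajectory from the first to the second. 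First I would record the structure of the coefficient matrix: writing $M(\psi)$ for the $\xi$-flux Jacobian $\xi_\beta\,\partial T^{a\beta}/\partial\psi_c$, equations (\ref{tildeB}), (\ref{egrgen}) give $A_\xi=A_\xi^{L}+M\,C\,M^{\!\top}$ (up to the internal pairing of Godunov variables), where $A_\xi^{L}:=\xi_\beta\xi_\delta B_L^{a\beta c\delta}$ is the in general singular Landau--Lifshitz block and $C$ is the matrix (\ref{C}); since $C$ is homogeneous linear in $(\lambda,\mu,\nu)$, so is the perturbation $\delta A_\xi:=MCM^{\!\top}$, and $A_\xi=A_\xi^{L}$ at $(\lambda,\mu,\nu)=0$. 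Neither $A_\xi^{L}$ nor $A_\xi$ is invertible, so (\ref{profileODE}) is a \emph{partially parabolic} profile system with $\ker A_\xi\neq\{0\}$, and the whole argument must be organized around this degeneracy.

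Second, I would fix a base state $\psi_0$ through which a one-parameter family of small Lax $k$-shocks passes, $k$ being a sound family and hence genuinely nonlinear for a physically convex equation of state, and analyse the generalized eigenvalue pencil $\det\bigl(z\,A_\xi(\psi_0)-M(\psi_0)\bigr)$. For a weak shock, $\xi$ is nearly characteristic at $\psi_0$, so $M(\psi_0)$ has one simple eigenvalue near $0$ with its remaining spectrum bounded away from $0$; correspondingly the pencil has one small \emph{finite} generalized eigenvalue that changes sign with the shock parameter, a fixed number of $O(1)$ finite ones splitting into stable and unstable parts according to the Lax index of the $k$-shock, and the rest ``at infinity'' carried by $\ker A_\xi(\psi_0)$. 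Genericity enters here: off a proper algebraic subset of $(\lambda,\mu,\nu)$-space one has (i) $\deg_z\det(zA_\xi-M)$ at its maximal value and $\ker A_\xi(\psi_0)$ no larger than forced, with the slaved kernel directions uniformly transverse to the equilibrium constraint, and (ii) no generalized eigenvalue on the imaginary axis besides the small one; equivalently, $\tilde B$ is a \emph{stable} (admissible) viscosity in the Majda--Pego / Kawashima--Shizuta sense. These requirements are cut out by resultants and discriminants polynomial in $(\lambda,\mu,\nu)$ and are met by $B_L$ itself, hence fail only on a null set --- the meaning of ``for almost any choice of the coefficients''. Note that the single most delicate condition, nondegeneracy of the dissipation in the $k$-th characteristic direction, holds automatically for a weak shock: $\ell_k\,\delta A_\xi\,r_k=(\ell_k M)\,C\,(M^{\!\top}r_k)$ vanishes at a sonic $\psi_0$ since $\ell_k M=0$ there, so $\ell_k A_\xi r_k=\ell_k A_\xi^{L}r_k+O(\mathrm{dist})\neq 0$.

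Third, with this spectral picture secured I would run the classical small-amplitude construction. On a neighbourhood of $\psi_0$ the slaved $\ker A_\xi$-directions are eliminated by the implicit function theorem and the $O(1)$ stable/unstable directions by invariant-manifold theory, reducing (\ref{profileODE}) on its centre manifold at $\psi^e_-$ --- whose dimension equals the Lax index --- to a one-dimensional equation $\phi'=g(\phi)$. Genuine nonlinearity of the $k$-th field forces $g$, near the bifurcation value of the shock parameter, to have exactly two simple zeros $\phi_-,\phi_+$, the reductions of $\psi^e_\mp$, with $g$ of one sign between them; this produces the monotone heteroclinic orbit, and lifting it back to $\R^5$ yields the profile $\hat\psi$ with the asymptotics (\ref{hetero}). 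Here I would follow the scheme of Majda and Pego and its extensions to partially dissipative, Kawashima-type systems, together with the relativistic shock-existence arguments already available in the FTBDNK setting.

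I expect the main obstacle to be the second step, and within it the treatment of $\ker A_\xi$: since $\tilde B$ is genuinely degenerate, (\ref{profileODE}) is not a small perturbation of a strictly parabolic system, and one must verify by hand that the constrained directions decouple uniformly and that both the Kawashima--Shizuta coupling and the spectral admissibility survive the linear perturbation $\delta A_\xi=MCM^{\!\top}$. It is precisely the possible breakdown of this --- for a measure-zero set of $(\lambda,\mu,\nu)$ --- that is excised by the hypothesis; everything downstream, the centre-manifold reduction and the scalar phase line, is then the textbook small-amplitude profile argument.
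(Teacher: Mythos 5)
Your proposal rests on a structural premise that is the opposite of what the theorem exploits. You declare that $A_\xi=\xi_\beta\xi_\delta\tilde B^{a\beta c\delta}$ is not invertible and then organize the entire argument around that degeneracy (generalized eigenvalue pencil with eigenvalues at infinity, partially parabolic/Kawashima--Shizuta structure, elimination of ``slaved'' kernel directions). But the point of the causalizing Eulerian gradient shift --- and part of what ``for almost any choice of $\lambda,\mu,\nu$'' guarantees --- is that the added block $\delta B^{a\beta c\delta}=\frac{\partial T^{a\beta}}{\partial \psi^f}C^f_{\tilde f}\frac{\partial T^{\tilde f\delta}}{\partial\psi_c}$ from \eqref{tildeB}, \eqref{egrgen}, \eqref{C} fills in exactly the rank that the Landau--Lifshitz block $B_L$ lacks, so that $\tilde B^{a1c1}$ is generically nonsingular and \eqref{profileODE1} is a genuine ODE on $\R^5$; the paper's proof accordingly writes the linearization as $v'=B^{-1}Av$ and needs no DAE-type reduction at all. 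Your genericity argument also breaks at this point: you propose to verify the polynomial nondegeneracy conditions at $B_L$ itself, but $(\lambda,\mu,\nu)=0$ is precisely the degenerate point where the contracted viscosity is singular and the pencil degree is not maximal, so it cannot serve as the witness that these conditions fail only on a null set. The paper gets the needed spectral facts structurally instead: $A$ and $B$ are symmetric in the Godunov variables, $0$ is a simple eigenvalue of $A$ at the sonic state, and extremeness of the acoustic mode, \eqref{Asemidef}, forces the remaining eigenvalues of $B^{-1}A$ to be real; only simplicity of the zero eigenvalue of $B^{-1}A$ (and invertibility of $B$) is left to genericity in $(\lambda,\mu,\nu)$.

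What you do identify correctly is the key cancellation: at a sonic state the $\xi$-flux Jacobian annihilates the acoustic direction, so the shift contributes nothing there, $\delta B^{a1c1}r_c=0$, i.e.\ \eqref{Br}. You use this (in left-eigenvector form) only to secure the Majda--Pego nondegeneracy of the effective scalar viscosity; the paper uses the same identity additionally to settle the orientation issue \eqref{direction} --- which endpoint is the $\alpha$- and which the $\omega$-limit --- by observing that the flow on the center manifold is asymptotically the Landau profile dynamics, for which the direction is known. Two further corrections: the center manifold here is one-dimensional (one small eigenvalue of $B^{-1}A$), not of dimension equal to the Lax index, and the orientation step must be made explicit (via the sign of $\ell\,\tilde B\,r=\ell\,B_L\,r$ together with genuine nonlinearity, or via the paper's comparison with $L$). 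With the false degeneracy premise removed and these points repaired, the remainder of your scheme collapses onto the paper's Majda--Pego-type small-amplitude argument.
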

\begin{proof}
Assume for concreteness and w.\ l.\ o.\ g.\ that $\xi^\beta=\delta^{\beta 1}$, i.\ e., \eqref{profileODE} 
reads 
\be\label{profileODE1}
\tilde B^{a1c1}(\psi^e)\psi_c'
=
T^{a1}(\psi^e)-q^a.
\ee
Choosing $\psi_*^e$ such that the Jacobian matrix
$
A^{a1c}=\displaystyle{\frac{\partial T^{a1}}{\partial\psi_c}(\psi_*^e)}
$
satisfies
\be\label{Areq0}
A^{a1c}r_c=0
\ee 
with some $r_c\neq 0$, we see that at $\psi_*^e$, also 
$$
\delta B^{a1c1}\ r_c
=
C_{fg}\displaystyle{\frac{\partial T^{a1}}{\partial\psi_f}\frac{\partial T^{g1}}{\partial\psi_c}}r_c
=
0
$$
and thus
\be\label{Br}
\tilde B^{a1c1}r_c=
\tilde B^{a1c1}_L r_c.
\ee
To find profiles for small shock waves, we consider this situation with $r_c$ the 
characteristic direction of the acoustic mode. As the acoustic mode is extreme, we 
have the property  
\be\label{Asemidef}  
\forall v_c:\quad v_a^*A^{a1c}v_c=0\Rightarrow v_c\in\C r_c.
\ee 
With $A$ and $B$ denoting the matrices 
$(A^{a1c}(\psi^e_*))_{a,c}$ and $\tilde B(^{a1c1}(\psi^e_*))_{a,c}$,
the linearization of \eqref{profileODE1} at $\psi_*^e$ reads
\be 
v'=B^{-1}Av.
\ee
As $A$ and $B$ are symmetric and $0$ is a simple eigenvalue of $A$, $0$ is generically also  
simple as an eigenvalue of $B^{-1}A$, and
using \eqref{Asemidef}, we see that the other eigenvalues of $B^{-1}A$ are also all real.
Therefore, \eqref{profileODE1} possesses
a onedimensional center manifold $\mathcal C(q^a)$ near $\psi_*^e$ that depends regularly on the parameter $q^a$, for values from a neigborhood of $q^a_*=T^{a1}(\psi_*^e)$. 
For an open set of such values, $\mathcal C(q^a)$ contains exactly two rest points of \eqref{profileODE1}: the boundary states $\psi_-^c$ and $\psi_+^c$ of a small shock wave.
We wish to conclude, similarly to \cite{MP}, that the open segment $\mathcal O$ of 
$\mathcal C(q^a)$ between $\psi_-^c$ and $\psi_+^c$ is the desired dissipation profile.
As, in view of the geometry of the Rankine-Hugoniot conditions and genuine nonlinearity, 
no further rest point is around nearby, $\mathcal O$ is a profile 
heteroclinic to  $\psi_-^c$ and $\psi_+^c$. It only remains to show that 
\be \label{direction}
\text{$\psi_-^c$ is the 
$\alpha$-limit and $\psi_+^c$ is the $\omega$-limit of $\mathcal O$}
\ee  
and not vice versa.
However, as the tangent space of $\mathcal C(q^a)$ converges to $\R r^a$ as $q^a\to q^a_*$,
this follows from the facts that, by virtue of \eqref{Br}, 
the dynamics on $\mathcal C(q^a)$  
is better and better approximated by
\be
\tilde B^{a1c1}_L(\psi^e)\psi_c'
=
T^{a1}(\psi^e)-q^a
\ee
in this limit, and that for the Landau formulation, \eqref{direction} holds.
\end{proof}
\begin{remark}
For related results on shock profiles cf.\  \cite{F21,FT24,B,P23,P24,SZ}. 
\end{remark}


\end{document}